\theoremstyle{plain}
\newtheorem{theorem}{Theorem}
\newtheorem{lemma}{Lemma}
\newtheorem*{theo*}{Theorem}
\newtheorem{corollary}{Corollary}
\theoremstyle{definition}
\newtheorem*{definition*}{Definition}
\DeclareMathOperator{\Ker}{Ker}
\DeclareMathOperator{\Aut}{Aut}
\DeclareMathOperator{\End}{End}
\DeclareMathOperator{\Soc}{Soc}
\def\K{\mathbb{K}}
\begin{document}
\sloppy
\title[Nilpotent modules over polynomial rings]
{Nilpotent modules over polynomial rings}
\author
{Ie.Yu.Chapovskyi, A. P.  Petravchuk}
\address{Ie.Yu.Chapovskyi: Department of Algebra and Mathematical Logic, Faculty of Mechanics and Mathematics,
Taras Shevchenko National University of Kyiv, 64, Volodymyrska street, 01033  Kyiv, Ukraine}
\email{safemacc@gmail.com}
\address{Anatoliy P. Petravchuk:
Department of Algebra and Mathematical Logic, Faculty of Mechanics and Mathematics,
Taras Shevchenko National University of Kyiv, 64, Volodymyrska street, 01033  Kyiv, Ukraine}
\email{aptr@univ.kiev.ua , apetrav@gmail.com}
\date{\today}
\keywords{automorphism group, nilpotent module,  polynomial ring, derivation, one dimensional socle}
\subjclass[2000]{Primary 13C13; Secondary 13F25, 20F29}

%
\begin{abstract}
Let $\mathbb K$ be an algebraically closed field of characteristic zero,
$\mathbb K[X]$  the polynomial ring in $n$ variables.
The vector space $T_n = \mathbb K[X]$ is a  $\mathbb K[X]$-module with the action $x_i \cdot v = v_{x_i}'$ for $v \in T_n$. Every  finite dimensional submodule $V$  of $T_n$ is  nilpotent, i.e. every polynomial 
$f \in \mathbb K[X]$ with zero constant term acts nilpotently (by multiplication) on  $V.$  We prove that every  nilpotent $\mathbb K[X]$-module $V$ of finite dimension over $\mathbb K$ with one dimensional socle can be isomorphically embedded in the module $T_n$. The  automorphism groups of the module $T_n$ and its finite dimensional  monomial submodules are found. Similar results are obtained for (non-nilpotent) finite dimensional $\mathbb K[X]$-modules with one dimensional socle.
 \end{abstract}
\maketitle

\section{Introduction}
Let $\K$ be an arbitrary algebraically closed field of characteristic zero, $\K[X]:=\K[x_1, \ldots , x_n]$ the polynomial algebra  in $n$ variables. Every  finite-dimensional module $V$ over $\K[X]$
determines $n$ pairwise commuting matrices $A_1,\dots,A_n$ defining the action of the elements $x_1,\dots,x_n$ of the algebra $\K[X]$ on $V$ in some fixed basis of $V$. Classification (up  to similarity) of such tuples ($A_1,\dots,A_n$) of square matrices   is a wild problem in case of $n \ge 2$ (see \cite{GP}), and therefore classifying finite-dimensional $\K[X]$-modules  is a wild problem as well. We consider  $\K[X]$-modules $V$ with a  restriction: $V$ has only one   minimal submodule, i.e. $\dim_{\K}\Soc(V)=1.$  A natural example of such a module is the linear space $T_n:=\K[X]$ over the field $\K$ with the following action of the algebra $\K[X]$ on $T_n$: $x_i f = \frac{\partial}{\partial x_i}(f)$ for all $f \in T_n$ (this module is infinite dimensional, and   its every nonzero finite dimensional submodule has one dimensional socle).
Finite dimensional submodules of the module $T_2$ were studied in \cite{Gonzalez} (such modules  appeared there as polynomial translation modules). Some  properties of modules over polynomial rings were studied in  \cite{Qui,Sus}. Note that endomorphisms of modules over the polynomial ring in one variable were studied in \cite{Best}.

Each finite-dimensional module $V$ over the algebra $\K[X]$ is also  a module over the abelian Lie algebra $\K\langle S_1, \ldots , S_n\rangle$  spanned by the linear operators $S_i$ which define  the action of the elements $x_i$ on the module $V$ (about connection between Lie algebras and modules over  polynomial rings see \cite{PS}).
Let $V$ be a finite dimensional module over the Lie algebra $\K\langle S_1, \ldots , S_n\rangle .$  Then $V$ is a direct sum of root submodules $V = V_{\lambda_1}\oplus \dots \oplus V_{\lambda_k}$, where $\lambda_i$ are roots on the Lie algebra $\K\langle S_1, \ldots , S_n\rangle .$
 Each element $a$ from $\K\langle S_1, \ldots , S_n\rangle$ induces on $V_{\lambda_i}$ a linear operator $\overline{a_{i}}$ with the only  eigenvalue $\lambda_i(a)$.
 Hence, it is sufficient to consider root subspaces only and to suppose that the elements of $\K\langle S_1, \ldots , S_n\rangle$ induce linear operators  on the module $V$ with one  eigenvalue. First, we consider modules, where this eigenvalue $\lambda(a)$ is equal to zero. We  call such modules (over polynomial rings) nilpotent. In fact, for such a module $V$  there is  a number $k$ with $(\K_{0}[X])^k V = 0,$ where $\K_{0}[X]$ is the ideal of $\K[X]$ consisting of polynomials with zero constant term.

The main results of the paper: it is shown that every nilpotent finite dimensional $\K[X]$-module with one dimensional socle is isomorphic to a submodule of the module $T_n$ (Theorem \ref{nil}).  This result can also be  extended on non-nilpotent modules with one dimensional socle. It is proved that every finite dimensional module with one dimensional socle can be isomorphically embedded in the module $D_{\lambda {_1}, \ldots ,\lambda _{n}}$ for some $\lambda {_1}, \ldots ,\lambda _{n}\in \K$ (Corollary \ref{nonnilpoten}). Automorphism groups of the $\K[X]$-module $T_n$ and its finite dimensional submodules are studied. In particular, it is proved that $\Aut(T_n) \simeq \K[[X]]^*,$ where $\K[[X]]^*$ is the multiplicative group of the algebra $\K[[X]]$ of formal power series  in $n$ variables  (Theorem \ref{automorphisms}).
As a  consequence, the $\K[X]$-module $T_n$ contains  exactly one isomorphic copy of each finite dimensional nilpotent $\K[X]$-module with one-dimensional socle (Corollary \ref{unique}).

We use  standard notation. The ground  field $\K$ is of characteristic zero,  $\K[[X]]:=\K[[x_1, \ldots , x_n]]$  is the algebra of formal power series  in $n$ variables. We denote by  $\K_{0}[X]$ the ideal of the polynomial algebra $\K[X]$ consisting of polynomials with zero constant term.
Recall that the socle $\Soc(M)$ of a module $M$ is the  sum of all its minimal submodules. A $\K[X]$-module $V$ is called nilpotent if $(\K_{0}[X])^{k}V=0$ for some natural $k.$ A $\K[X]$-module $V$ is said to be locally nilpotent if for any $ a \in \K_{0}[X],  v \in V\ $ there exists a number $ k = k(a,v) $ such that $ a^k v = 0.$
If $V$ is a nilpotent finite dimensional $\K[X]$-module, then $\Soc(V)=\bigcap_{i=1}^n \ker S_i,$ where $S_i$ is the linear operator on $V$ induced by the element $x_i \in \K[X].$
Further, $\End(V)$  denotes the $\K$-algebra of all endomorphisms of the module $V.$  If $V$ is a  $\K[X]$-module  and $\theta$ is an automorphism of the algebra $\K[X],$ then  $V_{\theta}$ is the "twisted" \ $\K[X]$-module with the action: $f\circ v=\theta (f)\cdot v, \ f\in\K[X], v\in V.$

\section{The universal $\K[X]$-module with one dimensional socle}
Recall that  $T_n$  denotes the $\K[X]$-module $\K[X]$ with the action of the algebra generators $x_i$ on $T_n$ by the rule:
$x_i f = \frac{\partial}{\partial x_i}(f), i=1, \ldots ,n, \; f \in T_n$. It is easy to see that
$T_n$ is a locally nilpotent $\K[X]$-module and $\Soc(T_n) = \K[1]$ is a one-dimensional submodule. The following statement is well known (and can be derived from standard facts about de Rham cohomologies) but having no  exact references we supply  it with a  proof.
 \begin{lemma}\label{potential}
Let $f_1,\dots,f_k, \; k \le n$  be elements of $\K[x_1,\dots,x_n].$  If the polynomials $f_1,\dots,f_k$ satisfy the conditions
		$\frac{\partial f_i}{\partial x_j} = \frac{\partial f_j}{\partial x_i},\; i,j = 1,\dots,k,$ then there exists a polynomial $h \in \K[x_1,\dots,x_n]$  such that $\frac{\partial h}{\partial x_i} = f_i,\; i = 1,\dots,k$.
		The polynomial $h$ is determined by polynomials $f_1,\dots,f_k$ uniquely up to a summand from $\K[x_{k+1},\dots,x_n]$.
\end{lemma}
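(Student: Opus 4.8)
The plan is to argue by induction on $k$, building the potential $h$ one variable at a time and using the compatibility relations to guarantee that each new integration does not spoil the equations already arranged. The base case $k=1$ is immediate: since $\K$ has characteristic zero, integrating $f_1$ term by term with respect to $x_1$ produces a polynomial $h$ with $\frac{\partial h}{\partial x_1}=f_1$, and any two such antiderivatives differ by an element of $\Ker(\partial/\partial x_1)=\K[x_2,\dots,x_n]$.

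For the inductive step, suppose $1\le m<k$ and that $h_m\in\K[x_1,\dots,x_n]$ has been found with $\frac{\partial h_m}{\partial x_i}=f_i$ for $i=1,\dots,m$. Set $g:=f_{m+1}-\frac{\partial h_m}{\partial x_{m+1}}$. For each $j\le m$, using the inductive hypothesis and the assumption $\frac{\partial f_{m+1}}{\partial x_j}=\frac{\partial f_j}{\partial x_{m+1}}$, one computes
\[
\frac{\partial g}{\partial x_j}
=\frac{\partial f_{m+1}}{\partial x_j}-\frac{\partial}{\partial x_{m+1}}\!\left(\frac{\partial h_m}{\partial x_j}\right)
=\frac{\partial f_{m+1}}{\partial x_j}-\frac{\partial f_j}{\partial x_{m+1}}=0 ,
\]
so $g\in\K[x_{m+1},\dots,x_n]$. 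Integrating $g$ with respect to $x_{m+1}$ \emph{inside} $\K[x_{m+1},\dots,x_n]$ yields $g_0$ with $\frac{\partial g_0}{\partial x_{m+1}}=g$; put $h_{m+1}:=h_m+g_0$. Because $g_0$ does not involve $x_1,\dots,x_m$, the relations $\frac{\partial h_{m+1}}{\partial x_i}=f_i$ still hold for $i\le m$, while $\frac{\partial h_{m+1}}{\partial x_{m+1}}=\frac{\partial h_m}{\partial x_{m+1}}+g=f_{m+1}$. After $k$ steps, $h:=h_k$ has the required property.

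For uniqueness, if $h$ and $\tilde h$ both satisfy $\frac{\partial h}{\partial x_i}=\frac{\partial \tilde h}{\partial x_i}=f_i$ for $i=1,\dots,k$, then $\frac{\partial(h-\tilde h)}{\partial x_i}=0$ for all such $i$, and in characteristic zero this forces $h-\tilde h\in\K[x_{k+1},\dots,x_n]$; conversely, every element of $\K[x_{k+1},\dots,x_n]$ has vanishing partials with respect to $x_1,\dots,x_k$ and so may be added to $h$ without affecting the conclusion. The only genuinely delicate point is the verification that the obstruction $g=f_{m+1}-\frac{\partial h_m}{\partial x_{m+1}}$ lies in $\K[x_{m+1},\dots,x_n]$: this is precisely where the compatibility conditions $\frac{\partial f_i}{\partial x_j}=\frac{\partial f_j}{\partial x_i}$ are used, and it is what allows the induction to proceed without disturbing the equations settled at earlier stages. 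Everything else reduces to term-by-term integration of polynomials, which is unproblematic over a field of characteristic zero.
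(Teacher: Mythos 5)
Your proof is correct and follows essentially the same inductive scheme as the paper: at each step one integrates the correction term $f_{m+1}-\partial h_m/\partial x_{m+1}$ with respect to $x_{m+1}$, the compatibility relations guaranteeing that this correction does not disturb the equations already established. The only cosmetic difference is that you first observe the correction term lies in $\K[x_{m+1},\dots,x_n]$ and conclude the preservation automatically, whereas the paper verifies the same fact by differentiating under the integral sign; the two computations are identical in content.
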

\begin{proof}
Uniqueness is obvious. Let us build such a polynomial $h$  by induction on $k$.
		If  $k=1,$ then the polynomial  $h(x) = \int_0^{x_1} f_1 dt$ satisfies the  conditions of lemma (here the variable $x_1$ of the polynomial $f_1$ is denoted by $t$). 		
		By inductive hypothesis,  a polynomial $h_1 \in \K[x_1,\dots,x_n]$ is built  such that
		$$\frac{\partial h_1}{\partial x_i} = f_i,\quad i = 1,\dots,k-1.$$
		Setting $h = h_1 + \int_0^{x_k} (f_k -
		\frac{\partial h_1}{\partial x_k}) dx_k$ we have for all $i < k$
		$$ \frac{\partial h}{\partial x_i}  =
		\frac{\partial h_1}{\partial x_i} + \frac{\partial}{\partial x_i}
		\int_0^{x_k} \left( f_k - \frac{\partial h_1}{\partial x_k}\right) dx_k = f_i + \int_0^{x_k} \left( \frac{\partial f_k}{\partial x_i} -
		\frac{\partial f_i}{\partial x_k } \right) dx_k = f_i .$$
		It is easy to show that  $\frac{\partial h}{\partial x_k} = f_k$, so the polynomial  $h$ satisfies  the conditions of the lemma.
\end{proof}

The following statement shows that $T_n$ is  universal (in some sense) for  finite dimensional nilpotent $\K[X]$-modules with one dimensional socle.

\begin{theorem} \label{nil}
	Every  finite dimensional nilpotent $\K[X]$-module $V$ with one dimensional socle  can be isomorphically embedded in the $\K[X]$-module $T_n$.
\end{theorem}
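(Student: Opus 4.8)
The plan is to exhibit, for each linear functional $\lambda\in V^{*}$, an explicit $\K[X]$-module homomorphism $\phi_{\lambda}\colon V\to T_{n}$, and then to choose $\lambda$ so that $\phi_{\lambda}$ becomes injective. Write $S_{1},\dots,S_{n}$ for the operators on $V$ induced by $x_{1},\dots,x_{n}$ and, for a multi-index $\alpha$, put $S^{\alpha}=S_{1}^{\alpha_{1}}\cdots S_{n}^{\alpha_{n}}$; since the $S_{i}$ pairwise commute this is unambiguous. As $V$ is finite dimensional and nilpotent there is an $N$ with $S^{\alpha}=0$ for all $|\alpha|\ge N$. I would then define
$$\phi_{\lambda}(v)=\sum_{|\alpha|<N}\frac{\lambda(S^{\alpha}v)}{\alpha!}\,x^{\alpha}\in\K[X]=T_{n},$$
a genuine (finite) polynomial depending $\K$-linearly on $v$. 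The point of the divided-power coefficients $1/\alpha!$ is that on $T_{n}$ the generator $x_{i}$ acts by $\partial/\partial x_{i}$, sending $x^{\alpha}/\alpha!$ to $x^{\alpha-e_{i}}/(\alpha-e_{i})!$ (with $e_{i}$ the $i$th unit multi-index), which matches precisely the effect of replacing $v$ by $S_{i}v$ inside $\lambda(S^{\alpha}(\,\cdot\,))$. Concretely, using $(\alpha+e_{i})!=(\alpha_{i}+1)\,\alpha!$ one checks in one line that $\phi_{\lambda}(S_{i}v)=\frac{\partial}{\partial x_{i}}\phi_{\lambda}(v)$ for every $i$ and every $v\in V$; since the $x_{i}$ generate $\K[X]$, this makes $\phi_{\lambda}$ a homomorphism of $\K[X]$-modules for every choice of $\lambda$.

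It remains to arrange injectivity, and this is exactly where the one-dimensional socle enters. The kernel is $\ker\phi_{\lambda}=\{v\in V:\lambda(S^{\alpha}v)=0\text{ for all }\alpha\}=\{v\in V:\lambda(\K[X]\cdot v)=0\}$, which is precisely the largest $\K[X]$-submodule of $V$ contained in $\ker\lambda$. If this submodule were nonzero it would, being of finite length, contain a minimal submodule; but $\dim_{\K}\Soc(V)=1$ means $V$ has exactly one minimal submodule, namely $\Soc(V)$, so we would be forced to have $\Soc(V)\subseteq\ker\lambda$. Hence, choosing $\lambda$ with $\lambda|_{\Soc(V)}\ne 0$ (possible since $\Soc(V)\ne 0$), we get $\ker\phi_{\lambda}=0$, so $\phi_{\lambda}$ is an isomorphic embedding of $V$ into $T_{n}$, as required.

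The whole argument is short, and I do not expect a genuine obstacle once the map $\phi_{\lambda}$ has been written down; the only step that requires care is the multi-index bookkeeping in the identity $\phi_{\lambda}(S_{i}v)=\partial\phi_{\lambda}(v)/\partial x_{i}$, and the rest is formal. I should remark that Lemma~\ref{potential} is not needed on this route: it underlies the alternative, more hands-on strategy of prescribing the images in $T_{n}$ of a chosen generating set of $V$ and then solving the resulting compatibility conditions $\partial h/\partial x_{i}=f_{i}$ by constructing a potential; but passing instead through an arbitrary functional that does not vanish on the socle avoids both the choice of generators and any integration.
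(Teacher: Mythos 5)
Your proof is correct, and it takes a genuinely different route from the paper. The paper argues by induction on $\dim_{\K}V$: it picks a codimension-one submodule $W$, embeds $W$ into $T_n$ by the inductive hypothesis, and then uses the Poincar\'e-type Lemma~1 (existence of a polynomial potential $h$ with $\partial h/\partial x_i=f_i$) to extend the embedding over one more dimension, checking separately that $h$ lands outside the image of $W$. You instead write down the embedding in closed form as the ``Taylor coefficient'' map $v\mapsto\sum_{\alpha}\lambda(S^{\alpha}v)\,x^{\alpha}/\alpha!$ attached to a linear functional $\lambda$ not vanishing on the socle; the divided-power normalization makes the intertwining identity $\phi_{\lambda}(S_i v)=\partial_i\phi_{\lambda}(v)$ a one-line computation (the boundary terms at $|\alpha|=N-1$ die because $S^{\alpha+e_i}=0$), nilpotency guarantees the sum is a polynomial, and the kernel is visibly the largest submodule of $V$ contained in $\ker\lambda$, which must be zero once $\lambda|_{\Soc(V)}\ne 0$ because every nonzero submodule of a finite-dimensional module contains a minimal one and the unique minimal submodule is $\Soc(V)$. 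Your argument is shorter, explicit, non-inductive, and exhibits $T_n$ as (the locally nilpotent part of) an injective hull of the trivial module in the spirit of Matlis duality; it also makes the uniqueness statements later in the paper more transparent. What the paper's route buys is Lemma~1 itself, which is reused in Lemma~5 to extend isomorphisms between submodules of $T_n$, so it is not wasted effort there; but for Theorem~2 alone your approach is cleaner and fully rigorous as written.
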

\begin{proof}
	Induction on $k = \dim_{\K} V$. If $k=1$, then
	$V = K\langle v\rangle $ for any non-zero element $v \in V$.
	We define the embedding $\varphi: V \to T_n$ by the rule $\varphi(v) = 1$ and further  by linearity.
	Let $\dim_{\K} V = k > 1$ and $W$  be a submodule in $V$ of codimension $1$ in $V$ (such  a submodule does exist because the module $V$ is nilpotent).
	Take any element $v_0 \in V \setminus W$.
	Then  $x_i v_{0} \ne 0$ for some $i, 1 \le i \le n$.
	Indeed, otherwise $\K\left< v_0 \right>$ is a one-dimensional submodule, and therefore
	$\Soc(V) = \K\left< v_0 \right>.$ The latter  is impossible  since $\Soc(V)$ is obviously contained in $W$. The contradiction shows that $x_i v_{0} \ne 0$ for some $i, 1 \le i \le n.$  By the inductive hypothesis there exists  an isomorphism $\varphi: W \to M$, where $M$ is a submodule in $T_n$. Denote by $S_i$ the linear operator on $V$  induced by the element $x_i$, i.e.
	$S_i(v) = x_i v, \; v \in V,\, i = 1,\dots, n$.
	Then  $S_1,\dots,S_n$ are pairwise commuting nilpotent operators on $V$ and $\bigcap_{i=1}^n \ker S_i$ is a one dimensional submodule which coincides with the socle $\Soc(V)$.
	Since $\dim_{\K} V/W = 1,$ we have 	$S_i(v_0) \in W,\; i = 1\dots,n$. Therefore, $\varphi(S_i(v_0)) \in M,\; i = 1,\dots n$ and $\varphi(S_i(v_0))$ are polynomials in $x_1,\dots x_n$.
	Let us denote $f_i = \varphi(S_i(v_0))$. The map  $\varphi$ is an isomorphism of
	$\K[X]$-modules,  so the following equalities hold
	\begin{align*}
	\varphi(S_i(S_j(v_0))) = \frac{\partial}{\partial x_i}
	\left( \varphi(S_j(v_0)) \right) =
	\frac{\partial f_j}{\partial x_i} \\
	\varphi(S_j(S_i(v_0))) = \frac{\partial}{\partial x_j}
	\left( \varphi(S_i(v_0)) \right) =
	\frac{\partial f_i}{\partial x_j}
	\end{align*}
	Since $S_i S_j = S_j S_i$, we have
	\begin{align}	\label{potential}
	\frac{\partial f_i}{\partial x_j} =
	\frac{\partial f_j}{\partial x_i},\quad i,j = 1,\dots,n.
	\end{align}
	By Lemma \ref{potential} there exists  a polynomial  $h\in \K[X]$  such that
$\frac{\partial h}{\partial x_i}=f_i,  i=1, \ldots , n.$
	It is easy to see that  $M + \K \left < h \right >$ is a submodule of $T_n$.
	Let us show that $h \notin M$. Indeed, suppose
	$h \in M$. Set $v_1 = \varphi^{-1}(h) \in W$
	and note that $\varphi^{-1}$ is an
	 isomorphism of the $\K[X]$-modules $M$ and $W$.
	Therefore the following equalities hold:
$$
	\varphi^{-1}
	\left(
	\frac{\partial h}{\partial x_i}
	\right) =
	S_i(\varphi^{-1}(h)),\quad i = 1,\dots,n.
$$
	On the other hand, we have
$$
 	\varphi^{-1}
	\left(
	\frac{\partial h}{\partial x_i}
	\right) =
	\varphi^{-1}(f_i) = S_i(v_0),
$$
	and therefore  $ 	S_i(v_1 - v_0) = 0, \quad i = 1,\dots,n.$
	The last equalities mean  that $v_1 - v_0 \in \Soc(V) =
	\K \left < v_0 \right >$, thus $v_1 - v_0 \in W$.
	Since $v_1 \in W$ by above mentioned, we obtain $v_0 \in W.$ The latter contradicts to the choice of  $v_0$. The contradiction shows that $h \notin M$.
	Define a $\K$-linear map
$$
	\psi:V = W + \K \left < v_0 \right > \to M +
	\K \left < h \right >
$$
	by the rule: $ 	\psi(w) = \varphi(w),   \  w \in W, \ 	\psi(v_0) = h,$
	and further by linearity.
	
	Straightforward check shows that
	$\psi$ is an  isomorphism of the  $\K[X]$-modules $V$ and
	 $M + \K \langle  h\rangle \subset T_n.$ The proof is complete.
\end{proof}
 Theorem \ref{nil} is easily extended on non-nilpotent modules. Consider the linear space $P_{n}:=\K[[X]] $ of the algebra of formal power series in $n$ variables,   and define on it the action of the algebra $\K[X]$ in  the same way as for the module $T_n.$ Let $\lambda_1,\dots,\lambda_n$ be arbitrary elements of the field $\K.$ The product   $D_{\lambda {_1}, \ldots ,\lambda _{n}}:=\exp(\lambda_1 x_1 + \dots + \lambda_n x_n)T_n$ is obviously a submodule of the $\K[X]$-module $P_n.$    It is easy to see  that $\lambda_1,\dots,\lambda_n$ are eigenvalues of the operators  induced by $x_1,\dots,x_n$ respectively on $D_{\lambda {_1}, \ldots ,\lambda _{n}}.$
\begin{corollary}\label{nonnilpoten}
	Let $V$ be a finite-dimensional $\K[X]$-module with one dimensional socle. Then $V$ is isomorphic to a submodule of  $D_{\alpha_1,\dots,\alpha_n}$ for some $\alpha_1,\dots,\alpha_n \in \K$.
\end{corollary}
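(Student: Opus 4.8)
The plan is to reduce Corollary~\ref{nonnilpoten} to Theorem~\ref{nil} by stripping off the eigenvalues of the operators induced by $x_1,\dots,x_n$ on $V$ and restoring them afterwards via multiplication by an exponential. First I would use the generalized eigenspace decomposition for the pairwise commuting operators $S_1,\dots,S_n$ induced on $V$ by $x_1,\dots,x_n$. Since $\K$ is algebraically closed, $V=\bigoplus_{\lambda\in\K^n}V_\lambda$ with $V_\lambda=\{v\in V:\ (S_i-\lambda_i\,\mathrm{id})^{\dim_\K V}v=0,\ i=1,\dots,n\}$, and each $V_\lambda$ is a $\K[X]$-submodule (generalized eigenspaces of $S_i$ are invariant under every operator commuting with $S_i$). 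Every nonzero $V_\lambda$ has nonzero socle, so the hypothesis $\dim_\K\Soc(V)=1$ forces all but one summand to vanish: there is a tuple $\alpha=(\alpha_1,\dots,\alpha_n)\in\K^n$ with $V=V_\alpha$, i.e. $T_i:=S_i-\alpha_i\,\mathrm{id}$ is nilpotent for each $i$. The operators $T_1,\dots,T_n$ pairwise commute, and $\bigcap_{i=1}^n\ker T_i=\Soc(V)$: any common eigenvector of the $S_i$ must have eigenvalue $\alpha_i$ with respect to $S_i$ (because $T_i$ is nilpotent), so $\bigcap_i\ker T_i$ is precisely the largest semisimple submodule of $V$.

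Next I would form the twisted module $V':=V_\theta$, where $\theta\in\Aut(\K[X])$ is the translation with $\theta(x_i)=x_i-\alpha_i$; concretely $V'$ is the underlying vector space of $V$ with $x_i$ acting as $T_i$. As the $T_i$ are pairwise commuting nilpotent operators, $V'$ is a finite-dimensional nilpotent $\K[X]$-module, and by the socle formula for nilpotent modules recalled in the introduction, $\Soc(V')=\bigcap_i\ker T_i=\Soc(V)$ is one-dimensional. Theorem~\ref{nil} then provides a $\K[X]$-module isomorphism $\varphi\colon V'\to M$ onto a submodule $M\subseteq T_n$.

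Finally I would twist back. Put $e:=\exp(\alpha_1x_1+\dots+\alpha_nx_n)\in\K[[X]]$, a unit, and define $\psi\colon V\to D_{\alpha_1,\dots,\alpha_n}$ by $\psi(v):=e\cdot\varphi(v)$. Since multiplication by $e$ in $\K[[X]]$ is injective and $\varphi$ is injective, $\psi$ is an injective $\K$-linear map with image $eM\subseteq eT_n=D_{\alpha_1,\dots,\alpha_n}$. The one computation to perform is that $\psi$ intertwines the actions:
$$
x_i\cdot\psi(v)=\frac{\partial}{\partial x_i}\bigl(e\,\varphi(v)\bigr)
=e\Bigl(\alpha_i\,\varphi(v)+\frac{\partial}{\partial x_i}\varphi(v)\Bigr)
=e\bigl(\alpha_i\,\varphi(v)+\varphi(T_iv)\bigr)
=e\,\varphi(S_iv)=\psi(x_iv),
$$
where we used that $\varphi$ is a homomorphism for the $V'$-structure and $S_i=\alpha_i\,\mathrm{id}+T_i$. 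Hence $V\cong eM$, a submodule of $D_{\alpha_1,\dots,\alpha_n}$, which is the claim.

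The only step that is not purely routine is the first one: identifying the eigenvalue tuple $\alpha$ and showing that $\bigcap_i\ker T_i=\Soc(V)$ is one-dimensional, which is where the one-dimensional-socle hypothesis is genuinely used. Once the nilpotent twisted module $V'$ is in place, Theorem~\ref{nil} does the substantive work and the exponential change of variables is a bookkeeping matter.
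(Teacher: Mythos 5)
Your proof is correct and follows essentially the same route as the paper: twist by the translation $x_i\mapsto x_i-\alpha_i$, apply Theorem~\ref{nil} to the resulting nilpotent module, and multiply by $\exp(\alpha_1x_1+\dots+\alpha_nx_n)$ to land in $D_{\alpha_1,\dots,\alpha_n}$. The only (cosmetic) differences are that you justify nilpotency of the twisted module via the generalized eigenspace decomposition, where the paper reads the $\alpha_i$ off the socle and asserts nilpotency as easy, and that you verify the final intertwining by direct computation rather than through the chain $W_\sigma\simeq(V_\theta)_\sigma\simeq V_{\theta\sigma}\simeq V$.
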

\begin{proof}
	Write $\Soc V$ as $\K \left < v_0 \right >$ for some $v_0\in V, v_0\not =0.$ Then  $x_i v_0 = \alpha_i v_0$ for some $\alpha_i \in \K,\; i=1,\dots,n$. Let  $\theta$ be the automorphism of the polynomial algebra
	$\K[X]$  defined by the rule:  $x_i \mapsto x_i - \alpha_i, \ i=1, \ldots , n.$ Denote by  $V_{\theta}$ the corresponding twisted $\K[X]$-module. It is easy to see that the module $V_{\theta}$ is nilpotent (because the elements $x_1,\dots,x_n\in \K[X]$ induce nilpotent linear operators on $V_{\theta}$).
	By Theorem \ref{nil}, the  $\K[X]$-module $V_{\theta}$ is isomorphic to a submodule $W$
	of the  $\K[X]$-module $T_n$. The product
	$\exp(\alpha_1 x_1 + \dots + \alpha_n x_n) W$ is obviously a submodule of the $\K[X]$-module  $D_{\alpha_{1}, \ldots , \alpha _{n}}.$  Straightforward check shows that  $\exp(\alpha_1 x_1 + \dots + \alpha_n x_n) W \simeq W_{\sigma}$, where $\sigma$ is the automorphism of the polynomial algebra $\K[X]$ defined by $\sigma(x_i)=x_i+\alpha_i,\; i = 1,\dots,n.$
 Thus we have $W_{\sigma} \simeq (V_{\theta})_{\sigma} \simeq V_{\theta \sigma} \simeq V$.
	But then  $V \simeq W_{\sigma},$ and $W_{\sigma}$ is a submodule of the module
	$\exp(\alpha_1 x_1 + \dots + \alpha_n x_n) T_n = D_{\alpha_1,\dots,\alpha_n}$.
\end{proof}

\section{Automorphism groups of modules with one dimensional socle }

\begin{lemma}\label{decrease}
	Let $\varphi \in \End(T_n)$.	
	Then $\deg_{x_{i}} \varphi(f) \le \deg_{x_{i}} f$ for every polynomial $f\in T_n, \ \  i=1,\dots,n.$
\end{lemma}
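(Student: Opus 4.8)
The plan is to use the only structural feature at hand: an endomorphism $\varphi$ of the $\K[X]$-module $T_n$ commutes with the action of every generator $x_i$, and on $T_n$ that action is $\partial/\partial x_i$. First I would iterate the relation $\varphi(x_i\cdot g)=x_i\cdot\varphi(g)$ to obtain
\[
\varphi\!\left(\frac{\partial^{m} f}{\partial x_i^{m}}\right)=\frac{\partial^{m}}{\partial x_i^{m}}\,\varphi(f),\qquad f\in T_n,\ m\ge 0,\ 1\le i\le n .
\]

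The second ingredient is the elementary observation that for $g\in\K[X]$ and $d\ge 0$ one has $\deg_{x_i}g\le d$ if and only if $\partial^{d+1}g/\partial x_i^{d+1}=0$. Writing $g=\sum_{k\ge 0}c_k x_i^k$ with the coefficients $c_k$ not involving $x_i$, the $(d+1)$-st derivative equals $\sum_{k\ge d+1}k(k-1)\cdots(k-d)\,c_k x_i^{k-d-1}$, and since the field has characteristic zero the scalars $k(k-1)\cdots(k-d)$ are nonzero for $k\ge d+1$; this is the only place the characteristic-zero hypothesis enters.

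Combining the two is then immediate. If $f=0$ the claim is trivial, so take $f\neq 0$ and put $d=\deg_{x_i}f$. By the observation $\partial^{d+1}f/\partial x_i^{d+1}=0$, hence applying $\varphi$ and using the displayed commutation we get $\partial^{d+1}\varphi(f)/\partial x_i^{d+1}=\varphi(0)=0$, so $\deg_{x_i}\varphi(f)\le d=\deg_{x_i}f$. As $i$ was arbitrary, this gives the assertion for all variables.

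I do not expect a genuine obstacle: the argument is nothing more than transporting the identity "$\varphi$ commutes with $\partial/\partial x_i$" through the degree-versus-vanishing-derivative equivalence. The only points requiring a word of care are the trivial zero-polynomial case and the essential use of characteristic zero above. It is worth noting, with a view to the study of $\Aut(T_n)$ that follows, that the same reasoning applied to all monomials $x^{\alpha}$ of a fixed total degree shows $\varphi$ also does not increase total degree, so $\varphi$ preserves each finite-dimensional submodule of $T_n$ consisting of polynomials of bounded degree.
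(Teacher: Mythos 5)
Your argument is correct and is essentially the paper's own proof: both rest on the identity $\varphi\circ\partial^k/\partial x_i^k=\partial^k/\partial x_i^k\circ\varphi$ together with the characteristic-zero fact that $\deg_{x_i}$ is detected by the vanishing of iterated $x_i$-derivatives (the paper phrases this as $\deg_{x_i}g=\max\{k:\partial^k g/\partial x_i^k\neq 0\}$, you phrase it contrapositively via the $(d+1)$-st derivative). No gap; your explicit handling of $f=0$ and the closing remark on total degree are harmless additions.
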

\begin{proof}
	Since $\varphi$ is an endomorphism of the module $T_n$  we have for every
	$f \in T_n$
	$$
	\varphi \left(\frac{\partial^k}{\partial x_i^k}f \right) =
	\frac{\partial^k}{\partial x_i^k} \varphi(f), \quad i = 1,\dots,n.
	$$
	Then  the following relations yield the result:
	$$
	\deg_{x_{i}} \varphi(f) =
	\max\{k \ge 0: \frac{\partial^k}{\partial x_i^k} \varphi(f) \ne 0\} = 	\max\{k \ge 0: \varphi(\frac{\partial^k}{\partial x_i^k} f) \ne 0\} \le $$
	$$\le \max\{k \ge 0: \frac{\partial^k}{\partial x_i^k} f \ne 0\} =
	\deg_{x_{i}}f, \quad i = 1,\dots,n.
	$$
\end{proof}
We   further   use the following multi-indices
$$
	\alpha := (\alpha_1,\dots,\alpha_n),  \  \ \
	\alpha! := \prod_{i=1}^n \alpha_i!,  \  \ \
	x^{\alpha} := \prod_{i=1}^n x_i^{\alpha_i}, \ \ \
	\frac{\partial^{\alpha}}{\partial x^{\alpha}} := \frac{\partial^{\alpha_1+\dots+\alpha_n}}
	{\partial x_{1}^{\alpha_1}\dots\partial x_{n}^{\alpha_n}}.
$$

\begin{theorem}\label{endo}
	A linear transformation $\varphi$ of the vector space $\K[X]$ is an
	endomorphism of the $\K[X]$-module $T_n$ if and only if it can be written in the form
	$$
	\varphi =
	\sum_{\substack{(\alpha_1,\dots,\alpha_n)\\
			\alpha_i \ge 0}}
	c_{\alpha_1 \,\dots,\alpha_n} \frac{\partial^{\alpha_1+\dots+\alpha_n}}
	{\partial x_{1}^{\alpha_1}\dots\partial x_{n}^{\alpha_n}}, \ \ c_{\alpha_1 \,\dots,\alpha_n}\in\K .
$$
	The coefficients here are the form
$$
	c_{\alpha_1 \,\dots,\alpha_n} =
	\varphi \left(\frac{x_1^{\alpha_1}}{\alpha_1!} \dots 					  	 \frac{x_n^{\alpha_n}}{\alpha_n!} \right)(0).
$$
\end{theorem}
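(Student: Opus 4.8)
The plan is to prove both implications of the equivalence, dealing with the easy ``if'' direction together with the coefficient formula, and then recovering the coefficients of an arbitrary module endomorphism by differentiating and evaluating at the origin.

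First I would settle the ``if'' part. Given scalars $c_{\alpha}\in\K$, the operator $D:=\sum_{\alpha} c_{\alpha}\frac{\partial^{\alpha}}{\partial x^{\alpha}}$ is a well-defined $\K$-linear map on $\K[X]$, since when it is applied to a fixed polynomial $f$ only the finitely many terms with $\alpha_i\le\deg_{x_i}f$ for all $i$ contribute. Each summand $\frac{\partial^{\alpha}}{\partial x^{\alpha}}$ commutes with every $\frac{\partial}{\partial x_j}$, hence so does $D$, and this is exactly the condition that $D$ be an endomorphism of the $\K[X]$-module $T_n$. For the coefficient formula, observe that $\frac{\partial^{\beta}}{\partial x^{\beta}}\bigl(\frac{x^{\alpha}}{\alpha!}\bigr)$ equals $1$ when $\beta=\alpha$, equals the nonconstant monomial $\frac{x^{\alpha-\beta}}{(\alpha-\beta)!}$ when $\beta\le\alpha$ componentwise but $\beta\ne\alpha$, and equals $0$ otherwise; therefore $D\bigl(\frac{x^{\alpha}}{\alpha!}\bigr)(0)=c_{\alpha}$.

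For the ``only if'' part, let $\varphi\in\End(T_n)$, set $c_{\alpha}:=\varphi\bigl(\frac{x^{\alpha}}{\alpha!}\bigr)(0)$, and let $D$ be the corresponding operator as above. Since both $\varphi$ and $D$ are $\K$-linear, it suffices to check $\varphi=D$ on the basis $\{x^{\beta}/\beta!\}$. Fix $\beta$ and put $g:=\varphi\bigl(\frac{x^{\beta}}{\beta!}\bigr)$; by Lemma~\ref{decrease} we have $\deg_{x_i}g\le\beta_i$, so $g=\sum_{\gamma\le\beta} d_{\gamma}x^{\gamma}$ for suitable $d_{\gamma}\in\K$. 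As $\varphi$ commutes with each $\frac{\partial}{\partial x_j}$,
\[
\frac{\partial^{\gamma}}{\partial x^{\gamma}}\,g \;=\; \varphi\!\left(\frac{\partial^{\gamma}}{\partial x^{\gamma}}\frac{x^{\beta}}{\beta!}\right)
\;=\;
\begin{cases}
\varphi\!\left(\dfrac{x^{\beta-\gamma}}{(\beta-\gamma)!}\right) & \text{if } \gamma\le\beta,\\
0 & \text{otherwise.}
\end{cases}
\]
Evaluating $\frac{\partial^{\gamma}}{\partial x^{\gamma}}g$ at the origin kills every monomial of $g$ except $x^{\gamma}$ itself, which contributes $\gamma!\,d_{\gamma}$; comparing with the display gives $\gamma!\,d_{\gamma}=\varphi\bigl(\frac{x^{\beta-\gamma}}{(\beta-\gamma)!}\bigr)(0)=c_{\beta-\gamma}$ for all $\gamma\le\beta$. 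Hence
\[
g \;=\; \sum_{\gamma\le\beta}\frac{c_{\beta-\gamma}}{\gamma!}\,x^{\gamma} \;=\; \sum_{\alpha\le\beta} c_{\alpha}\,\frac{x^{\beta-\alpha}}{(\beta-\alpha)!} \;=\; D\!\left(\frac{x^{\beta}}{\beta!}\right),
\]
so $\varphi=D$, as required.

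The calculations are routine multi-index and divided-power bookkeeping; there is no serious obstacle. The two points worth stating explicitly are (a) that the formal series $\sum_{\alpha}c_{\alpha}\frac{\partial^{\alpha}}{\partial x^{\alpha}}$ acts as a genuine operator on $\K[X]$ because it truncates to a finite sum on each polynomial, and (b) the use of Lemma~\ref{decrease} — equivalently, of $\frac{\partial^{\gamma}}{\partial x^{\gamma}}x^{\beta}=0$ for $\gamma\not\le\beta$ — to guarantee that $\varphi(x^{\beta}/\beta!)$ is supported on $\{\gamma\le\beta\}$, which is exactly what matches the support of $D(x^{\beta}/\beta!)$ and lets one read off $d_{\gamma}=c_{\beta-\gamma}/\gamma!$.
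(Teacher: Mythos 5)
Your proof is correct. Both directions go through: the ``if'' part and the coefficient formula are routine, and the ``only if'' part correctly pins down $\varphi$ on the divided-power monomial basis. The ingredients are the same as in the paper --- Lemma \ref{decrease} and the fact that $\varphi$ commutes with every $\frac{\partial}{\partial x_i}$ --- but the organization differs. The paper introduces the auxiliary endomorphism $N=\varphi-\sum_{\alpha}c_{\alpha}\frac{\partial^{\alpha}}{\partial x^{\alpha}}$ and shows $N\equiv 0$ by induction on $\deg f$: the inductive hypothesis forces $N(f)$ to be a constant, which is then computed to be zero via Taylor's formula. You instead avoid the induction entirely by evaluating $\varphi$ on each basis element $x^{\beta}/\beta!$ directly: Lemma \ref{decrease} bounds the support of $g=\varphi(x^{\beta}/\beta!)$ to $\{\gamma\le\beta\}$, and applying $\frac{\partial^{\gamma}}{\partial x^{\gamma}}$ followed by evaluation at $0$ reads off each coefficient as $c_{\beta-\gamma}/\gamma!$, which matches $D(x^{\beta}/\beta!)$ term by term. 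Your route is slightly more elementary (no induction, no auxiliary operator) and has the small bonus of making the uniqueness of the coefficients explicit in the ``if'' direction; the paper's version establishes the identity $\varphi(f)=\sum_{\alpha}c_{\alpha}\frac{\partial^{\alpha}f}{\partial x^{\alpha}}$ for arbitrary $f$ in one stroke rather than via linear extension from a basis. Both are complete proofs of the same statement.
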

\begin{proof}
 Necessity.		
	Let  the linear transformation $\varphi$ be an endomorphism of the $\K[X]$-module $T_n$. 	
	Consider the map  $N: T_n\to T_n$ defined by the rule:
	$$
	N(f) = \varphi(f) - \sum_{\alpha:\,\alpha_i \ge 0} \varphi \left(\frac{x^{\alpha}}{\alpha!}\right)(0)\,
	\frac{\partial^{\alpha}}{\partial x^{\alpha}} f, \quad
	f \in T_n
	$$
	($N(f)$ is  correctly defined because the linear operators $\frac{\partial^{\alpha}}{\partial x^{\alpha}}$ on $T_n$ are locally nilpotent).  Let us  show (by induction on the total degree of the polynomial $f$) that  the map $N$  is identically zero.  Suppose  that $\deg f = 0$.
	Then we have 	$N(f) = \varphi(f) - \varphi(f)(0) = 0,$	since by Lemma \ref{decrease}  the endomorphism $\varphi$ does not increase $\deg f.$ 	Let $\deg f > 0$.
	The map $N$ is an endomorphism of the $\K[X]$-module $T_n$ (as a linear combination of  endomorphisms).
	By induction hypothesis
$$
	 N(\frac{\partial}{\partial x_i}f) = 0=\frac{\partial}{\partial x_i} N(f) ,
	\quad i = 1,\dots n,
$$
	 so we obtain
$$
	N(f) = N(f)(0) = \varphi(f)(0) - \sum_{\alpha:\,\alpha_i \ge 0} \varphi \left(\frac{x^{\alpha}}{\alpha!}\right)(0)\,
	\frac{\partial^{\alpha} f}{\partial x^{\alpha}}(0) = $$
$$=\varphi(f)(0) - \sum_{\alpha:\,\alpha_i \ge 0} \varphi \left(\frac{\partial^{\alpha} f}{\partial x^{\alpha}}(0)\frac{x^{\alpha}}{\alpha!}\right)(0) =
	\varphi
	\left( f - \sum_{\alpha:\,\alpha_i \ge 0}
	\frac{\partial^{\alpha} f}{\partial x^{\alpha}}(0)
	\frac{x^{\alpha}}{\alpha!}
	\right)(0).
$$
	It follows from the last equality and Taylor's formula for the polynomial $f$  that $N(f) = 0.$
	The sufficiency is obvious.
\end{proof}

\begin{corollary}
	A linear trasformation $\varphi$ of the linear space $\K[X]$ is an automorphism of the  $\K[X]$-module $T_n$ if and only if it can be written in the form
	$$
	\varphi =
	\sum_{\substack{(\alpha_1,\dots,\alpha_n)\\
			\alpha_i \ge 0}}
	c_{\alpha_1 \,\dots,\alpha_n} \frac{\partial^{\alpha_1+\dots+\alpha_n}}
	{\partial x_{1}^{\alpha_1}\dots\partial x_{n}^{\alpha_n}}  \ \ \mbox{with} \
	 c_{(0,\dots,0)} \ne 0.$$
\end{corollary}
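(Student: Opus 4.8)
The plan is to read the answer straight off Theorem~\ref{endo} and then determine, among the endomorphisms of the displayed shape, exactly which ones are invertible. By Theorem~\ref{endo} every endomorphism $\varphi$ of the $\K[X]$-module $T_n$ has the form $\varphi=\sum_{\alpha}c_\alpha\,\frac{\partial^\alpha}{\partial x^\alpha}$ with $c_\alpha=\varphi(x^\alpha/\alpha!)(0)$; in particular the summand with $\alpha=(0,\dots,0)$ is $c_{(0,\dots,0)}\cdot\mathrm{id}$ and $c_{(0,\dots,0)}=\varphi(1)(0)$. An automorphism of $T_n$ is precisely a bijective module endomorphism, because the set-theoretic inverse of a bijective module homomorphism is automatically again a module homomorphism. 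So the whole statement reduces to showing that $\varphi$ is bijective if and only if $c_{(0,\dots,0)}\neq 0$.

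For the ``only if'' direction I would argue by contraposition. If $c_{(0,\dots,0)}=0$, then, since $\frac{\partial^\alpha}{\partial x^\alpha}(1)=0$ for every $\alpha\neq(0,\dots,0)$, we get $\varphi(1)=0$; hence $\varphi$ is not injective and not an automorphism.

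For the ``if'' direction, assume $c_{(0,\dots,0)}\neq 0$ and write $\varphi=c_{(0,\dots,0)}\bigl(\mathrm{id}+N\bigr)$ where $N=c_{(0,\dots,0)}^{-1}\sum_{\alpha\neq(0,\dots,0)}c_\alpha\,\frac{\partial^\alpha}{\partial x^\alpha}$. The operator $N$ is locally nilpotent: each $\frac{\partial^\alpha}{\partial x^\alpha}$ with $|\alpha|\geq 1$ strictly lowers the total degree, and for a fixed $f\in\K[X]$ only finitely many of the terms $\frac{\partial^\alpha}{\partial x^\alpha}f$ are nonzero, so $N^{k}f=0$ once $k>\deg f$. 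Therefore the Neumann series $\sum_{k\geq 0}(-N)^{k}$ is, applied to any polynomial, a finite sum and defines a $\K$-linear operator on $\K[X]$ that is a two-sided inverse of $\mathrm{id}+N$; hence $\varphi$ is bijective, i.e.\ an automorphism. (Alternatively one may invoke Lemma~\ref{decrease}: $\varphi$ preserves each finite-dimensional space $\K[X]_{\leq d}$ of polynomials of total degree $\leq d$, acts on the associated graded space as $c_{(0,\dots,0)}\cdot\mathrm{id}$, and so is invertible on every $\K[X]_{\leq d}$ exactly when $c_{(0,\dots,0)}\neq 0$; since $\K[X]=\bigcup_d\K[X]_{\leq d}$ this yields bijectivity of $\varphi$ on all of $\K[X]$.)

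The one point that needs a little care is the bookkeeping for the formally infinite expressions — defining $N$, checking termwise that the Neumann series terminates on each polynomial, and confirming $\mathrm{id}+N$ is genuinely invertible as a linear map — but this is exactly the local-nilpotency argument already used in the proof of Theorem~\ref{endo}, so I expect no real obstacle here.
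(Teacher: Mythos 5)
Your proof is correct, and it takes the route the paper intends: the corollary is stated without proof as an immediate consequence of Theorem~\ref{endo}, and your argument (read off the form of $\varphi$ from that theorem, note $c_{(0,\dots,0)}=\varphi(1)(0)$ so that $c_{(0,\dots,0)}=0$ kills injectivity, and invert $\mathrm{id}+N$ by the terminating Neumann series when $c_{(0,\dots,0)}\neq 0$) is exactly the standard verification being left to the reader. The only nitpick is that Lemma~\ref{decrease} controls the partial degrees $\deg_{x_i}$ rather than the total degree, but your parenthetical alternative still works with the corresponding finite-dimensional filtration, and your main Neumann-series argument does not rely on it.
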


\begin{corollary}
	 Every submodule of the $\K[X]$-module $T_n$ is invariant under automorphism group  $\Aut(T_n).$
\end{corollary}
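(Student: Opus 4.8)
The plan is to reduce the statement to Theorem~\ref{endo} together with the defining property of a submodule of $T_n$, namely that it is a $\K$-linear subspace stable under each of the operators $\partial/\partial x_i$ (the action of $x_i$). So first I would fix a submodule $M$ of $T_n$ and an automorphism $\varphi \in \Aut(T_n)$. By Theorem~\ref{endo}, applied to $\varphi$ (which is in particular an endomorphism of the module $T_n$), we may write
$\varphi = \sum_{\alpha} c_{\alpha}\, \frac{\partial^{\alpha}}{\partial x^{\alpha}}$
for suitable scalars $c_{\alpha}\in\K$, where $\alpha$ runs over all multi-indices with nonnegative entries.

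Next I would evaluate $\varphi$ on an arbitrary element $f \in M$. Since $f$ is a polynomial, $\frac{\partial^{\alpha}}{\partial x^{\alpha}} f = 0$ for all but finitely many $\alpha$, so $\varphi(f) = \sum_{\alpha \in F} c_{\alpha}\, \frac{\partial^{\alpha}}{\partial x^{\alpha}} f$ is in fact a finite $\K$-linear combination, indexed by some finite set $F$. Each summand lies in $M$: indeed $\frac{\partial^{\alpha}}{\partial x^{\alpha}} f = S_1^{\alpha_1}\cdots S_n^{\alpha_n}(f)$, where $S_i$ denotes the operator induced by $x_i$ on $T_n$, and $M$, being a submodule, is stable under each $S_i$. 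As $M$ is also a linear subspace, we conclude $\varphi(f) \in M$, hence $\varphi(M) \subseteq M$. (In fact the same argument shows that \emph{every} endomorphism of $T_n$ maps $M$ into $M$, but for the corollary we only need it for automorphisms.)

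Finally, since $\varphi^{-1}$ is again an automorphism of the $\K[X]$-module $T_n$, the preceding step applied to $\varphi^{-1}$ gives $\varphi^{-1}(M) \subseteq M$, i.e.\ $M \subseteq \varphi(M)$; combining the two inclusions yields $\varphi(M) = M$, so $M$ is invariant under $\Aut(T_n)$. I do not expect any real obstacle here, as all the content has already been packaged into Theorem~\ref{endo} --- that module endomorphisms of $T_n$ are precisely the (a priori infinite, but locally finite) constant-coefficient differential operators --- and the corollary reduces to the remark that differentiating an element of a submodule keeps it in the submodule. The only point deserving a word of care is that the formally infinite sum defining $\varphi$ collapses to a finite one when applied to a polynomial, which is exactly the local nilpotence of the operators $\partial^{\alpha}/\partial x^{\alpha}$ already invoked in the proof of Theorem~\ref{endo}.
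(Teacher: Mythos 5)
Your proof is correct and is exactly the argument the paper intends: the corollary is stated without proof because it follows immediately from Theorem~\ref{endo}, since any submodule of $T_n$ is closed under the partial derivatives and hence under every constant-coefficient differential operator, and applying this to $\varphi^{-1}$ as well gives equality. Your write-up, including the remark that the infinite sum collapses to a finite one on polynomials, matches the paper's (implicit) reasoning.
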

\begin{lemma}\label{twist}
Let $V$ be a module over the polynomial algebra $\K[X]$ and $\theta\in \Aut(\K[X]).$  Then $\Aut(V)=\Aut(V_{\theta}),$ where $V_\theta$ denotes the corresponding twisted module.
\end{lemma}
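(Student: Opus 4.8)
The plan is to observe that $V$ and $V_{\theta}$ have the \emph{same} underlying vector space, so it makes sense to compare $\Aut(V)$ and $\Aut(V_{\theta})$ as subsets of the group of invertible $\K$-linear transformations of that space, and to show these subsets coincide. In fact I would prove the slightly stronger statement $\End(V)=\End(V_{\theta})$ as $\K$-subalgebras of $\End_{\K}(V)$, from which the assertion about automorphism groups follows by passing to groups of units.

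First I would unwind the definitions. A $\K$-linear map $\psi\colon V\to V$ lies in $\End(V)$ if and only if $\psi(f\cdot v)=f\cdot\psi(v)$ for all $f\in\K[X]$ and $v\in V$, while it lies in $\End(V_{\theta})$ if and only if $\psi(f\circ v)=f\circ\psi(v)$ for all such $f,v$, that is, $\psi(\theta(f)\cdot v)=\theta(f)\cdot\psi(v)$. The key point is that $\theta$, being an automorphism of $\K[X]$, is in particular surjective: as $f$ runs through $\K[X]$, so does $g:=\theta(f)$. Hence the family of conditions $\{\psi(\theta(f)\cdot v)=\theta(f)\cdot\psi(v)\}_{f\in\K[X]}$ is the same family as $\{\psi(g\cdot v)=g\cdot\psi(v)\}_{g\in\K[X]}$, and therefore $\psi\in\End(V)\iff\psi\in\End(V_{\theta})$. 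Since addition and multiplication (composition of maps) in $\End(V)$ and $\End(V_{\theta})$ are computed identically, the two endomorphism algebras are equal, and so their unit groups $\Aut(V)$ and $\Aut(V_{\theta})$ coincide.

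I expect there to be essentially no obstacle here: the only thing that genuinely uses the hypothesis is the surjectivity of $\theta$ on $\K[X]$, and once that is invoked the argument is a one-line substitution. It is worth remarking explicitly, though, that no finiteness or (local) nilpotency assumption on $V$ is needed — the lemma holds for an arbitrary $\K[X]$-module — and that the natural transformation $V\mapsto V_{\theta}$ does change the module structure even though it leaves the endomorphism and automorphism sets intact, which is exactly the point exploited in the proof of Corollary~\ref{nonnilpoten}.
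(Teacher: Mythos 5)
Your proposal is correct and follows essentially the same route as the paper: both arguments identify $\Aut(V)$ and $\Aut(V_{\theta})$ inside the invertible linear maps of the common underlying space and use the surjectivity (bijectivity) of $\theta$ to show the defining equivariance conditions coincide. Your remark that the statement upgrades to $\End(V)=\End(V_{\theta})$ is a harmless and accurate strengthening.
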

\begin{proof}
Note  that the modules $V$ i $V_{\theta}$ have the same underlying vector space $V,$  and therefore  $\Aut(V)$ and $\Aut(V_{\theta})$ are subgroups of the  group $GL(V).$  Let $\varphi \in \Aut(V_{\theta})$ be an arbitrary automorphism. Then    $\varphi (a\circ v)=a\circ \varphi (v)$ for all  $a\in \K[X]$ and $v\in V_{\theta}.$ Taking into account the  action of $\K[X]$ on $V_{\theta}$ we get $\varphi (\theta (a)\cdot v)=\theta(a)\cdot \varphi (v),$ where the symbol $(\cdot )$ denotes  the  action of  $\K[X]$ on $V.$
 Denoting the element $\theta (a)$ by $b$  we get $\varphi (b\cdot v)=b\cdot \varphi (v)$ for all $b\in \K[X]$ and $v\in V_{\theta}$ (we used here that $\theta :\K[X]\to \K[X]$ is a bijection). The latter  means that $\varphi$ is automorphism of the module $V.$
Thus, $\Aut(V_{\theta})\subseteq \Aut(V).$ One can prove analogously that $\Aut(V)\subseteq \Aut(V_{\theta}).$
\end{proof}
   We write down monomials from $\K[X]$ in the form $x^{\lambda}:=x_{1}^{\lambda _{1}}\cdots x_{n}^{\lambda _{n}},$ where $\lambda =(\lambda_1,\cdots,\lambda_n)$ is a multi-index. The following statement is well known.
\begin{lemma}\label{expon}
Let $\K_0[[X]]\subset \K[[X]]$  be the additive group of formal power series with zero constant term   and   $\K_1[[X]]^*$  the multiplicative group of formal power series with constant term which  equals one.
Then  $\exp:\K_0[[X]] \to \K_1[[X]]^*$ is a group isomorphism, and therefore
		$\K[[X]]^* = \K^* \times \exp(\sum_{\lambda,\lambda \ne 0}
		\K x^{\lambda}) \simeq \K^* \times \prod_{\lambda,\lambda \ne 0} \K$.
\end{lemma}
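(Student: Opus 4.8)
The plan is to produce an explicit two-sided inverse for $\exp$ and to deduce everything else from the well-known one- and two-variable formal identities. First I would record the topological setup: $\K[[X]]$ carries the $(x_1,\dots,x_n)$-adic topology, in which it is complete, and for $f\in\K_0[[X]]$ one has $f^k\in(\K_0[[X]])^k$, so the series $\exp(f)=\sum_{k\ge 0}f^k/k!$ converges and has constant term $1$; thus $\exp$ maps $\K_0[[X]]$ into $\K_1[[X]]^*$. Symmetrically, for $1+g\in\K_1[[X]]^*$ with $g\in\K_0[[X]]$ the series $\log(1+g)=\sum_{k\ge1}(-1)^{k-1}g^k/k$ converges and lies in $\K_0[[X]]$.

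Next I would verify the homomorphism property and bijectivity by reduction to universal identities. Since $\K[[X]]$ is commutative, any $f,g\in\K_0[[X]]$ commute, so the Cauchy-product expansion of $\exp(f)\exp(g)$, regrouped by the binomial theorem, equals $\exp(f+g)$; equivalently, one substitutes $s\mapsto f,\ t\mapsto g$ in the identity $\exp(s+t)=\exp(s)\exp(t)$ valid in $\mathbb{Q}[[s,t]]$. For bijectivity, the identities $\exp(\log(1+t))=1+t$ and $\log(\exp(t))=t$ hold in $\mathbb{Q}[[t]]$; substituting $t\mapsto g$ and $t\mapsto f$ respectively gives $\exp(\log(1+g))=1+g$ and $\log(\exp(f))=f$ in $\K[[X]]$. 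Hence $\exp\colon\K_0[[X]]\to\K_1[[X]]^*$ is a group isomorphism.

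For the displayed decomposition, observe that $u\in\K[[X]]$ is a unit precisely when its constant term $c$ is nonzero; writing $u=c\cdot(c^{-1}u)$ with $c\in\K^*$ and $c^{-1}u\in\K_1[[X]]^*$ realizes $\K[[X]]^*$ as the internal direct product $\K^*\times\K_1[[X]]^*$ (the two subgroups meet only in $1$ and their product is everything). Composing the isomorphism $\log\colon\K_1[[X]]^*\to\K_0[[X]]$ with the coefficient isomorphism $\K_0[[X]]\to\prod_{\lambda\ne0}\K$ then yields $\K[[X]]^*\simeq\K^*\times\prod_{\lambda\ne0}\K$. The only slightly delicate point — and the one I would spell out with care — is the legitimacy of the substitutions $s\mapsto f$, $t\mapsto g$, etc.: substitution of a power series with zero constant term into a one-variable formal power series is a well-defined ring homomorphism that is continuous for the adic topologies, which is exactly what makes the transfer of the classical identities rigorous.
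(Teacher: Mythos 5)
The paper gives no proof of this lemma at all --- it is stated as ``well known'' and used later only through the existence of the explicit inverse $\log$ and the identification of $\K_0[[X]]$ with $\prod_{\lambda\ne 0}\K$. Your argument is correct and complete: convergence of $\exp$ and $\log$ in the $(x_1,\dots,x_n)$-adic topology, transfer of the universal identities $\exp(s+t)=\exp(s)\exp(t)$, $\exp(\log(1+t))=1+t$, $\log(\exp(t))=t$ from $\mathbb{Q}[[s,t]]$ and $\mathbb{Q}[[t]]$ via substitution of series with zero constant term (legitimate since $\K$ has characteristic zero, so $\mathbb{Q}\subseteq\K$), and the internal direct product decomposition $\K[[X]]^*=\K^*\times\K_1[[X]]^*$ followed by the coefficient isomorphism $\K_0[[X]]\simeq\prod_{\lambda\ne 0}\K$. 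This is exactly the standard proof one would supply, and your closing remark correctly isolates the one point (well-definedness and continuity of substitution) that needs care.
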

\begin{lemma}\label{max}
	Let $M,N$ be isomorphic proper submodules of the module $T_n$ and  $\varphi:M \to N$ be an isomorphism.
	Then there exist submodules $M_1,N_1 \subset T_n $  with $M_1 \varsupsetneq M$, $N_1 \varsupsetneq N,$  and an isomorphism $\varphi_1:M_1 \to N_1$ such that
	$\varphi_1 \vert_M = \varphi$.
\end{lemma}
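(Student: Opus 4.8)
The plan is to imitate the one-dimensional extension step from the proof of Theorem~\ref{nil}. Since $M$ is a proper submodule of $T_n$, the quotient $T_n/M$ is a nonzero, locally nilpotent $\K[X]$-module. Choosing any nonzero $\overline f\in T_n/M$, the cyclic submodule $\K[X]\overline f$ is a homomorphic image of $\K[X]f$, which is spanned by the finitely many partial derivatives of the polynomial $f$; hence $\K[X]\overline f$ is finite-dimensional, nonzero and nilpotent, and therefore contains a one-dimensional submodule $\K\overline h$. A representative $h\in T_n$ of $\overline h$ then satisfies $h\notin M$ and $\frac{\partial h}{\partial x_i}=x_i h\in M$ for $i=1,\dots,n$, so $M_1:=M+\K\langle h\rangle=M\oplus\K\langle h\rangle$ is a submodule of $T_n$ with $M_1\varsupsetneq M$. (The case $M=0$ is trivial: take $M_1=N_1=\K\langle 1\rangle$ with $\varphi_1$ the identity; so assume $M\neq 0$ from now on.)

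Next I would build the target of the extension. Set $g_i:=\varphi\!\left(\frac{\partial h}{\partial x_i}\right)\in N$ for $i=1,\dots,n$. As $\varphi$ is a homomorphism of $\K[X]$-modules it commutes with every operator $\frac{\partial}{\partial x_j}$, so, using that mixed partial derivatives of $h$ coincide, $\frac{\partial g_i}{\partial x_j}=\varphi\!\left(\frac{\partial^{2} h}{\partial x_i\partial x_j}\right)=\frac{\partial g_j}{\partial x_i}$ for all $i,j$. By Lemma~\ref{potential} (with $k=n$) there is $h'\in\K[X]$ with $\frac{\partial h'}{\partial x_i}=g_i$, $i=1,\dots,n$, and then $N_1:=N+\K\langle h'\rangle$ is a submodule of $T_n$. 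Define $\varphi_1\colon M_1\to N_1$ by $\varphi_1(w)=\varphi(w)$ for $w\in M$, $\varphi_1(h)=h'$, extended by linearity (well defined because $M_1=M\oplus\K\langle h\rangle$). Checking the module axiom on the generators $x_1,\dots,x_n$ is routine: for $w\in M$ and $c\in\K$ one has $x_i(w+ch)=x_iw+c\frac{\partial h}{\partial x_i}\in M$, and both $\varphi_1\bigl(x_i(w+ch)\bigr)$ and $x_i\varphi_1(w+ch)$ equal $x_i\varphi(w)+c\,g_i$ since $\frac{\partial h'}{\partial x_i}=g_i$. Thus $\varphi_1$ is a $\K[X]$-module homomorphism restricting to $\varphi$ on $M$.

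The crux, and the step I expect to be the main obstacle, is to prove $h'\notin N$: this yields at once that $N_1\varsupsetneq N$ and that $\varphi_1$ is injective (if $\varphi(w)+ch'=0$ with $c\neq 0$, then $h'\in N$). It is the exact analogue of the ``$h\notin M$'' argument in Theorem~\ref{nil}, and it relies on the remark that every nonzero submodule of $T_n$ contains the constant $1$ (apply a partial-derivative operator of maximal total degree to any nonzero element of the submodule); in particular $1\in M$ and $1\in N$. Assume $h'\in N$ and put $v:=\varphi^{-1}(h')\in M$; since $\varphi^{-1}$ is also a module isomorphism, $x_iv=\varphi^{-1}(x_ih')=\varphi^{-1}(g_i)=\frac{\partial h}{\partial x_i}=x_ih$ for every $i$, so $\frac{\partial(v-h)}{\partial x_i}=0$ for all $i$, i.e. $v-h$ is a constant, hence $v-h\in\K\langle 1\rangle=\Soc(T_n)\subseteq M$. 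Then $h=v-(v-h)\in M$, contradicting the choice of $h$. Therefore $h'\notin N$, so $N_1\varsupsetneq N$ and $\varphi_1$ is injective; surjectivity of $\varphi_1$ onto $N_1$ is clear because $\varphi_1(M_1)\supseteq\varphi(M)=N$ and $\varphi_1(h)=h'$. Hence $\varphi_1\colon M_1\to N_1$ is the desired extension of $\varphi$ with $M_1\varsupsetneq M$ and $N_1\varsupsetneq N$.
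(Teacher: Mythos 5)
Your proof is correct and follows essentially the same route as the paper's: adjoin a one--dimensional extension $M\oplus\K\langle h\rangle$ with all $\frac{\partial h}{\partial x_i}\in M$, use Lemma~\ref{potential} to construct the image $h'$, and rule out $h'\in N$ by the same argument via constants lying in every nonzero submodule. The only difference is cosmetic: the paper takes $h$ to be a monomial of minimal total degree outside $M$ (so that its partial derivatives automatically lie in $M$), whereas you extract $h$ from the socle of a finite--dimensional cyclic submodule of $T_n/M$; you also treat the case $M=0$ separately, which the paper leaves implicit.
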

\begin{proof}
	Since $M \ne T_n$, there are monomials that do not belong to $M$.
	Choose one of such  monomials, say  $x_1^{k_1} \dots x_n^{k_n}\in T_{n}\setminus M,$ of minimal total degree.
	Let
	\begin{align*}
	g_i = \varphi \left(\frac{\partial}{\partial x_i}
	x_1^{k_1} \dots x_n^{k_n} \right), \quad i = 1,\dots,n.
	\end{align*}
	Since $\varphi$ is an isomorphism between submodules,  we get by the definition of $g_i$
	$$
	\frac{\partial g_i}{\partial x_j} = \frac{\partial g_j}{\partial x_i},
	\quad i,j = 1,\dots,n.
$$
	By Lemma \ref{potential},  there exists a polynomial $g \in \K[X]$, such that
$	\frac{\partial g}{\partial x_i} = g_i, \quad i = 1,\dots, n.
$
	Note that $g \notin N$. Indeed, suppose $g\in N.$ Denote $f=\varphi ^{-1}(g).$ Then, $f\in M$ and
$$ \varphi (\frac{\partial }{\partial x_i}(f-x_1^{k_1} \dots x_n^{k_n}))=\frac{\partial }{\partial x_i}(\varphi (f))-g_{i}=\frac{\partial }{\partial x_i}(g)-g_{i}=0, \ i=1, \ldots , n.$$
The latter means that $\frac{\partial }{\partial x_i}(f-x_1^{k_1} \dots x_n^{k_n})=0, i=1, \ldots ,n, $ and therefore  $f=x_1^{k_1} \dots x_n^{k_n}+c,$ where $c\in\K$ is a constant. Since each non-zero submodule from $T_n$ obviously contains the field $\K ,$ we have that $x_1^{k_1} \dots x_n^{k_n}=f-c\in M.$ We obtain a contradiction to the choice of $x_1^{k_1} \dots x_n^{k_n}.$ Hence $g\not \in N.$
Set
$$
	M_1 = M \oplus \K\langle x_1^{k_1} \dots x_n^{k_n}\rangle, \ \ \ 	N_1 = N \oplus \K\langle g\rangle .$$ It is easy to see that $M_1, N_1$ are submodules from $T_n.$ Define the linear map $\varphi _{1}:M_1\to N_1$ by the rule: 	 $\varphi_1(m)=\varphi (m), m\in M,  \varphi_1(x_1^{k_1} \dots x_n^{k_n})=g$ and further by linearity.
	Straightforward check shows that  $\varphi_1: M_1\to N_1$ is an isomorphism.
\end{proof}

\begin{theorem}\label{lift}
	Let $M \subset T_n$ be a proper submodule and $\varphi :M\to T_n$ a monomorphism of $\K[X]$-modules.
	Then there exists an automorphism $\overset{\sim}{\varphi} \in \Aut(T_n)$ such that
	$\overset{\sim}{\varphi} \vert_{M} = \varphi$.
\end{theorem}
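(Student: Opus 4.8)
The plan is to extend $\varphi$ step by step, adding one monomial at a time to the domain, and to take a suitable limit. The key tool is Lemma \ref{max}: starting from $M_0 := M$, $N_0 := \varphi(M)$, $\varphi_0 := \varphi$, we repeatedly apply Lemma \ref{max} to obtain an increasing chain of submodules $M_0 \subsetneq M_1 \subsetneq M_2 \subsetneq \cdots$ of $T_n$ together with compatible monomorphisms $\varphi_k : M_k \to T_n$ with $\varphi_{k+1}\vert_{M_k} = \varphi_k$. Moreover — and this is the crucial bookkeeping point — in the proof of Lemma \ref{max} the monomial adjoined at each step is chosen of \emph{minimal total degree} among all monomials not yet in the domain. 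Consequently, if we enumerate the monomials $x^{\alpha}$ by nondecreasing total degree, then after finitely many steps every monomial of total degree $\le d$ lies in $M_k$ for $k$ large enough, so that $\bigcup_k M_k = T_n$.

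Next I would assemble the $\varphi_k$ into a single linear map $\overset{\sim}{\varphi} : T_n \to T_n$: since the domains are nested and the maps agree on overlaps, for any $f \in T_n$ we may set $\overset{\sim}{\varphi}(f) := \varphi_k(f)$ for any $k$ with $f \in M_k$, and this is well defined and $\K$-linear. Because each $\varphi_k$ commutes with the action of every $x_i$ (i.e. with each $\partial/\partial x_i$) on its domain, so does $\overset{\sim}{\varphi}$; hence $\overset{\sim}{\varphi} \in \End(T_n)$, and clearly $\overset{\sim}{\varphi}\vert_M = \varphi$. It remains to check that $\overset{\sim}{\varphi}$ is bijective. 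Injectivity: if $\overset{\sim}{\varphi}(f) = 0$ then $\varphi_k(f) = 0$ for some $k$, and $\varphi_k$ is a monomorphism, so $f = 0$. For surjectivity one argues that the union of the images $N_k$ is all of $T_n$: by Lemma \ref{decrease} applied to $\varphi_k^{-1} : N_k \to M_k$ (an isomorphism of submodules, hence degree-nonincreasing), a monomial $x^{\beta}$ of total degree $d$ can only map under $\varphi_k^{-1}$ into the span of monomials of total degree $\le d$; combined with $\bigcup M_k = T_n$ and a dimension count on the finite-dimensional subspaces $T_n^{(\le d)}$ of polynomials of total degree $\le d$, one gets $\bigcup_k N_k = T_n$, so $\overset{\sim}{\varphi}$ is onto. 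Since an endomorphism of $T_n$ with nonzero constant coefficient $c_{(0,\dots,0)}$ is exactly an automorphism (Corollary after Theorem \ref{endo}), bijectivity suffices; alternatively one notes directly that a bijective module endomorphism has module-endomorphism inverse, so $\overset{\sim}{\varphi} \in \Aut(T_n)$.

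The main obstacle is the surjectivity/limit argument: one must be careful that the chain $\{M_k\}$ really exhausts $T_n$ (this is where the minimal-degree choice in Lemma \ref{max} is essential — without it the process could forever avoid some monomial), and that $\overset{\sim}{\varphi}$ maps $T_n$ \emph{onto} $T_n$ rather than into a proper submodule. The degree-control from Lemma \ref{decrease}, applied in both directions, is what makes the finite-dimensional graded pieces behave well and forces the images to fill up $T_n$. The other steps — well-definedness of the glued map, its being a module endomorphism, and injectivity — are routine.
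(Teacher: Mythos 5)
Your proposal is correct and rests on the same engine as the paper's proof --- iterated application of Lemma \ref{max} followed by a passage to the limit --- but it organizes the limit differently. The paper takes the set of all extensions of $\varphi$, applies Zorn's lemma to get a maximal pair $(\widetilde V,\widetilde\psi)$, and then maximality \emph{immediately} forces $\widetilde V=T_n$ (no bookkeeping about which monomial gets adjoined is needed), while surjectivity is obtained by applying Lemma \ref{max} once more to $\widetilde\psi^{-1}:\widetilde\psi(T_n)\to T_n$. You instead build an explicit $\omega$-chain $M_0\subsetneq M_1\subsetneq\cdots$, which obliges you to look inside the proof of Lemma \ref{max} (the statement alone does not guarantee exhaustion) and use the minimal-total-degree choice there; your argument that the minimal degree of a missing monomial is nondecreasing and that each degree level is exhausted in finitely many steps is sound, and it buys you a Zorn-free, fully constructive proof. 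Your surjectivity argument via a dimension count on the finite-dimensional degree-filtered pieces is also valid, with one small wrinkle: Lemma \ref{decrease} bounds the partial degrees $\deg_{x_i}$, not the total degree, so you should either filter by the finite-dimensional spaces $\{f:\deg_{x_i}f\le d,\ i=1,\dots,n\}$ (which your argument then closes up verbatim, since an injective degree-nonincreasing linear self-map of a finite-dimensional space is onto), or note that the identical computation with $\frac{\partial^{\alpha}}{\partial x^{\alpha}}$ gives non-increase of total degree. Alternatively, the cleanest finish is the one you mention in passing: once $\widetilde\varphi$ is an injective endomorphism of $T_n$, Theorem \ref{endo} writes it as $\sum_\alpha c_\alpha\frac{\partial^{\alpha}}{\partial x^{\alpha}}$, injectivity forces $c_{(0,\dots,0)}\ne 0$ (otherwise the constants are killed), and the Corollary to Theorem \ref{endo} then gives $\widetilde\varphi\in\Aut(T_n)$, making the separate surjectivity count unnecessary.
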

\begin{proof}
	Let  $\Psi$ be the set of all pairs of the form $(V, \psi ),$  where $V\supseteq M$ is a submodule from $T_n$ and $\psi :V\to T_n$  a monomorphism such that $\psi |_{M}=\varphi .$ 	Define a partial order on $\Psi$ by the following rule:
$$
(V_1, 	\psi_1) \preccurlyeq (V_2, \psi_2) \leftrightarrow V_1\subseteq V_2 \ \   \mbox{and } \ \  \psi_{2}|_{V_{1}}=\psi_1.
 $$
	If  $Z \subseteq \Psi$ is a chain, then there is an upper bound  $(V^{\star} , \psi^{\star}) $ for $Z$, where
$$
	V^* = \bigcup_{(V, \psi)\in Z}V, \ \ \mbox{and} \ \
   \psi^{\star}:V^{\star}\to T_n
$$
is defined by the rule: $\psi^{\star}(v)=\psi (v)$ for $v\in V $ with $(V, \psi )\in Z.$ By Zorn's lemma there is a maximal element $(\widetilde{V}, \widetilde{\psi})$ in $\Psi$.
	Let us show that $\widetilde{V}=T_n$ and $\widetilde{\psi}\in \Aut(T_n).$ Supposing that
	$\widetilde{V} \ne T_n$ and using Lemma \ref{max}
	we get a contradiction to the maximality of  $(\widetilde{V}, \widetilde{\psi}).$  Thus $\widetilde{V}=T_n.$  Further, if $\overset{\sim}{\psi}$ is not epimorphic, then applying Lemma \ref{max} to $({\widetilde \psi})^{-1}:\widetilde{\psi}(T_n) \to T_n,$ we have a contradiction to the fact that $\overset{\sim}{\psi}$ is a monomorphism. Hence,  $\overset{\sim}{\psi}$ is an automorphism of the module $T_n$ and the restriction $\overset{\sim}{\psi}$ on the submodule $M$ concides with the monomorphism $\varphi .$
\end{proof}
\begin{corollary}\label{unique}
	If $V_1$ and $V_2$ are submodules of the module $T_n$ and $V_1\not =V_2,$  then the modules $V_1$ and $V_2$ are not isomorphic.
\end{corollary}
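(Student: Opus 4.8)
The plan is to prove the contrapositive: any $\K[X]$-module isomorphism $\varphi\colon V_1 \to V_2$ between two submodules of $T_n$ must force $V_1 = V_2$. The entire argument will rest on two facts already established: Theorem~\ref{lift}, which says that a monomorphism from a \emph{proper} submodule of $T_n$ into $T_n$ extends to an automorphism of $T_n$, and the Corollary to Theorem~\ref{endo}, which says that every submodule of $T_n$ is invariant under $\Aut(T_n)$ (since every endomorphism of $T_n$ is a constant-coefficient differential operator, and submodules are stable under the operators $\partial/\partial x_i$).

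First I would treat the generic case, when $V_1 \subsetneq T_n$. Regarding $\varphi$ as a monomorphism $V_1 \to T_n$ (which is legitimate because $V_2 \subseteq T_n$), Theorem~\ref{lift} provides an automorphism $\widetilde{\varphi} \in \Aut(T_n)$ with $\widetilde{\varphi}\vert_{V_1} = \varphi$. Hence $V_2 = \varphi(V_1) = \widetilde{\varphi}(V_1)$. But $V_1$ is a submodule of $T_n$, so by the invariance corollary $\widetilde{\varphi}(V_1) = V_1$; therefore $V_2 = V_1$.

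It then remains to peel off the degenerate endpoint cases, where one of the modules is all of $T_n$. If $V_1 = T_n$, then $\varphi^{-1}\colon V_2 \to T_n$ is a monomorphism; were $V_2$ a proper submodule, Theorem~\ref{lift} would extend $\varphi^{-1}$ to some $\psi \in \Aut(T_n)$, and then $T_n = \varphi^{-1}(V_2) = \psi(V_2) = V_2$ by the invariance corollary, contradicting properness. Hence $V_2 = T_n = V_1$. The case $V_2 = T_n$ is the mirror image, obtained by the same reasoning applied to $\varphi$ itself. Combining all cases gives $V_1 = V_2$, which is what had to be shown.

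I do not expect a genuine obstacle here: the substantive content is entirely contained in Theorem~\ref{lift} and the invariance corollary, and this statement is a packaging of them. The one point that requires care is precisely that Theorem~\ref{lift} is stated only for proper submodules, which is why the two endpoint cases $V_i = T_n$ must be separated out and handled by passing to $\varphi^{-1}$ rather than to $\varphi$.
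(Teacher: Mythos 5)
Your proof is correct and follows exactly the route the paper intends: the paper states this corollary without proof as an immediate consequence of Theorem~\ref{lift} combined with the corollary that every submodule of $T_n$ is invariant under $\Aut(T_n)$. Your careful separation of the endpoint cases $V_i = T_n$ (where Theorem~\ref{lift} does not directly apply) is a welcome bit of extra rigor, but the substance is the same.
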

We  call a submodule $M$ of the module $T_n$  {\it monomial} if it has a basis over the field $\K$ consisting of monomials.
\begin{theorem}\label{automorphisms}
1. The algebra $\End(T_{n})$ of all  endomorphisms of the module $T_n$ is isomorphic to  algebra $\K[[X]]$ of  formal power series in $n$ variables. In particular,  the automorphism group $\Aut(T_{n})$ is isomorphic to the multiplicative group $\K[[X]]^{\star}$ of the algebra $\K[[X]].$

2. Automorphism group of a finite dimensional monomial submodule $M\subset T_n$ of dimension $m$ over $\K$ is isomorphic to the direct product
$\K^{\star}\times (\K^{+})^{m-1},$ where $\K^{+}$ is the  additive group of  $\K.$

3. Automorphism group of any finite-dimensional $\K[X]$-module $V$ with one dimensional socle is isomorphic to the quotient group of $\K^{\star}\times (\K^{+})^{m}$ for some $m\geq 0.$

\end{theorem}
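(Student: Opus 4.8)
The plan is to obtain Part~1 directly from Theorem~\ref{endo}, then prove once and for all a structural description of $\Aut(W)$ for an arbitrary finite-dimensional submodule $W\subseteq T_n$, and finally read off Parts~2 and~3 from it. For Part~1, observe that Theorem~\ref{endo} exhibits a bijection
$$\Phi\colon\End(T_n)\longrightarrow\K[[X]],\qquad \sum_\alpha c_\alpha\frac{\partial^\alpha}{\partial x^\alpha}\longmapsto\sum_\alpha c_\alpha x^\alpha ,$$
which is manifestly $\K$-linear and sends $\mathrm{id}_{T_n}$ to $1$. Since $\frac{\partial^\alpha}{\partial x^\alpha}\circ\frac{\partial^\beta}{\partial x^\beta}=\frac{\partial^{\alpha+\beta}}{\partial x^{\alpha+\beta}}$, composing two such operators corresponds to the Cauchy product of the associated power series, so $\Phi$ is an isomorphism of unital $\K$-algebras; as the automorphisms of any module are exactly the invertible elements of its endomorphism algebra, $\Aut(T_n)=\End(T_n)^{*}\cong\K[[X]]^{*}$.

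For the structural step, fix a nonzero finite-dimensional submodule $W\subseteq T_n$ (necessarily proper, with $1\in W$ generating $\Soc(W)$). Every $\varphi=\sum c_\alpha\frac{\partial^\alpha}{\partial x^\alpha}$ in $\End(T_n)$ maps $W$ into $W$, because each $\frac{\partial^\alpha}{\partial x^\alpha}=\prod_iS_i^{\alpha_i}$ does; hence restriction is an algebra homomorphism $\End(T_n)\to\End_\K(W)$, and via $\Phi$ its kernel is an ideal $J_W\subseteq\K[[X]]$. Moreover $J_W\subseteq\mathfrak m$, the maximal ideal of $\K[[X]]$: an operator $\sum c_\alpha\frac{\partial^\alpha}{\partial x^\alpha}$ annihilating $W$ annihilates $1$, and $\frac{\partial^\alpha}{\partial x^\alpha}(1)=0$ for $\alpha\ne0$, forcing $c_0=0$. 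Now any $\varphi\in\Aut(W)$ gives a monomorphism $W\hookrightarrow T_n$ that extends to some $\widetilde\varphi\in\Aut(T_n)$ by Theorem~\ref{lift}; since $\Aut(T_n)$ also preserves $W$, restriction is a surjective homomorphism $\Aut(T_n)\twoheadrightarrow\Aut(W)$ with kernel $\{\varphi:\varphi|_W=\mathrm{id}\}=\Phi^{-1}(1+J_W)$. Because $1+J_W\subseteq1+\mathfrak m\subseteq\K[[X]]^{*}$ and the reduction map $\K[[X]]^{*}\to(\K[[X]]/J_W)^{*}$ is surjective with kernel $1+J_W$, this yields
$$\Aut(W)\cong\K[[X]]^{*}/(1+J_W)\cong(\K[[X]]/J_W)^{*}.$$
The algebra $A_W:=\K[[X]]/J_W$ embeds into $\End_\K(W)$, so it is finite-dimensional; it is a commutative local $\K$-algebra with residue field $\K$ and nilpotent maximal ideal $\mathfrak n_W=\mathfrak m/J_W$. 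Hence $A_W^{*}=\K^{\star}\cdot(1+\mathfrak n_W)\cong\K^{\star}\times(1+\mathfrak n_W)$, and the $\exp$--$\log$ argument of Lemma~\ref{expon} identifies $(1+\mathfrak n_W,\cdot)$ with $(\mathfrak n_W,+)\cong(\K^{+})^{m}$, $m=\dim_\K\mathfrak n_W$; so $\Aut(W)\cong\K^{\star}\times(\K^{+})^{m}$.

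For Part~3, given an arbitrary finite-dimensional $V$ with one-dimensional socle, Corollary~\ref{nonnilpoten} gives $V\cong W$ for a submodule $W$ of $D_{\alpha_1,\dots,\alpha_n}=\exp(\sum\alpha_ix_i)T_n$; the map $f\mapsto\exp(\sum\alpha_ix_i)f$ identifies $D_{\alpha_1,\dots,\alpha_n}$ with the twist $(T_n)_\sigma$, $\sigma(x_i)=x_i+\alpha_i$, and a subspace of $T_n$ is a submodule of $(T_n)_\sigma$ iff it is a submodule of $T_n$ (replacing $S_i$ by $S_i+\alpha_i$ leaves the invariant subspaces unchanged), so $W$ corresponds to a finite-dimensional submodule $W'\subseteq T_n$ with $\Aut(V)=\Aut(W)=\Aut((W')_\sigma)=\Aut(W')$ by Lemma~\ref{twist}; the structural step then gives $\Aut(V)\cong\K^{\star}\times(\K^{+})^m$, a fortiori a quotient of $\K^{\star}\times(\K^{+})^m$. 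For Part~2, take $W=M$ monomial with basis $\{x^\lambda:\lambda\in\Lambda\}$, where $\Lambda$ is a finite order ideal of $\NN^n$ with $0\in\Lambda$ and $m=|\Lambda|=\dim_\K M$; a direct computation in the basis $\{x^\lambda/\lambda!\}$ shows that $\sum c_\alpha\frac{\partial^\alpha}{\partial x^\alpha}$ vanishes on $M$ precisely when $c_\alpha=0$ for every $\alpha\in\Lambda$ (using that $\{\alpha:\alpha\le\lambda\text{ for some }\lambda\in\Lambda\}=\Lambda$), so $J_M$ is the set of power series supported on $\NN^n\setminus\Lambda$, whence $\dim_\K(\K[[X]]/J_M)=|\Lambda|=m$, $\dim_\K\mathfrak n_M=m-1$, and $\Aut(M)\cong\K^{\star}\times(\K^{+})^{m-1}$.

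The main obstacle is not a single hard estimate but getting the structural step exactly right: checking that restriction is a well-defined algebra homomorphism, that $J_W\subseteq\mathfrak m$, that $\Aut(T_n)\to\Aut(W)$ is onto (this is exactly where Theorem~\ref{lift} is needed), and that passing to unit groups commutes with the quotient by $1+J_W$; once that is in place, the identification $(\K[[X]]/J_W)^{*}\cong\K^{\star}\times(\K^{+})^m$ is routine. The only genuine computation is the determination of $J_M$ in Part~2, where it matters that $\Lambda$ is an order ideal.
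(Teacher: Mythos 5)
Your proposal is correct, and all three parts go through. The key ingredients are the same as in the paper's proof --- Theorem \ref{endo} for the identification $\End(T_n)\cong\K[[X]]$, Theorem \ref{lift} for the surjectivity of the restriction $\Aut(T_n)\to\Aut(W)$, Lemma \ref{twist} together with Corollary \ref{nonnilpoten} for the reduction of Part~3 to submodules of $T_n$, and an exp--log argument for the additive structure of the unipotent part. The difference is in how the kernel computation is organized. The paper works multiplicatively inside $\Aut_1(T_n)$ and carries out the explicit computation only for monomial submodules, identifying $\Ker\bigl(\Aut_1(T_n)\to\Aut_1(M)\bigr)$ with $\exp$ of the span of the missing monomials; general finite-dimensional modules are then embedded into monomial ones, which is why Part~3 is stated only as ``a quotient of $\K^{\star}\times(\K^{+})^{m}$.'' You instead pass to the quotient algebra $A_W=\K[[X]]/J_W$, where $J_W$ is the annihilator ideal of $W$, and observe that $\Aut(W)\cong A_W^{*}=\K^{\star}\times(1+\mathfrak n_W)\cong\K^{\star}\times(\K^{+})^{\dim\mathfrak n_W}$ for \emph{every} finite-dimensional submodule $W$, monomial or not. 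This buys a cleaner uniform treatment (the only case-specific computation is $\dim A_M=m$ for monomial $M$, which your order-ideal argument handles correctly) and a genuinely stronger conclusion in Part~3: the automorphism group is not merely a quotient of $\K^{\star}\times(\K^{+})^{m}$ but is actually isomorphic to such a group. The small points that need care --- that $J_W\subseteq\mathfrak m$, that units lift along $\K[[X]]^{*}\to A_W^{*}$ because $\K[[X]]$ is local, and that $\Lambda$ is a down-set in $\NN^{n}$ --- are all addressed in your write-up.
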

\begin{proof}
The first statement of the theorem follows from Theorem \ref{endo}.
Let us prove the second statement. Denote by $\Aut _{1}(M)$  the group of all automorphisms of the module $M,$ which act trivially on the submodule $\K\langle 1\rangle \subseteq M.$  Let us show that $\Aut _{1}(M)\simeq (\K^{+})^{m-1},$ where $m=\dim _{\K}M.$
Let $M = \K \langle x^{\lambda^1},\dots,x^{\lambda^m} \rangle ,$ where $\{ \lambda^1, \ldots , \lambda^m\}$ is a set  of multi-indices numbering a basis $\{ x^{\lambda^1},\dots,x^{\lambda^m}\}$ of the monomial module $M$ (the multi-index $\lambda =(0, \ldots , 0)$ belongs obviously to this set).  It is easy to see that an automorphism
		$\varphi = \sum_{\lambda} c_{\lambda} \frac{\partial}{\partial x^\lambda}$ of the module $T_n$ acts identically on $M$ if and only if
		$$
		c_{\lambda} =
		\begin{cases}
		1, \quad \lambda = (0,\dots,0) \\
		0, \quad \lambda \in \{\lambda^1,\dots,\lambda^m\}\setminus\{(0,\dots,0)\}
		\end{cases}.
		$$
		Obviously, such an automorphism can be written in the form
		$$\varphi = \exp(\sum_{\lambda \ne \{\lambda^1, \ldots , \lambda^m\}}c_{\lambda} \frac{\partial}{\partial x^\lambda}),$$
and, conversely, automorphism of such a  form acts trivially on $M$.
These automorphisms form the kernel $\Ker \psi$  of the restriction $\psi :\Aut _{1}(T_n)\to \Aut_{1}(M)$ (here $\Aut _{1}(T_n)$ denotes the group of automorphisms of $T_n$, which act trivially on the submodule $\K\langle 1\rangle \subset T_n$). Taking into account Lemma  \ref{expon}, we see that the isomorphism $\ln (1-x):  \K_1[[X]]^*\to \K_0[[X]]$ maps the subgroup $\Ker \psi$ onto the $\K$-subspace $G\subseteq \K_0[[X]]$ with the basis $\{x^{\lambda}, \lambda \not \in \{\lambda^1, \ldots , \lambda^m\}\}.$
 It is obvious that $\K_0[X]/G\simeq (\K^{+})^{m-1},$ and therefore $\Aut_{1}(M)\simeq (\K^{+})^{m-1}.$ The last isomorphism implies  $\Aut(M)\simeq \K^{\star}\times (\K^{+})^{m-1}.$		
Let us prove the last statement. It is obvious that every finite dimensional submodule $W\subset T_n$ lies in some finite dimensional monomial submodule $\overline W$ of $T_n.$ Since, by Theorem \ref{lift}, $\Aut(W)$ is a quotient of the group $\Aut({\overline W}),$ we see (using Lemma \ref{twist}) that $\Aut(W)$  is isomorphic to the quotient group of $\K^{\star}\times (\K^{+})^{m}$ for some $m\geq 0.$
\end{proof}


%

\begin{thebibliography}{99}
\bibitem{Best} P.Best, M.Gualtieri, P.Hayden, Orbits of the centralizer of a linear operator, J. Lie Theory, (2012) v.4, 1039-1048.
 \bibitem{GP}
I.~M.~Gelfand, V.~A.~Ponomarev,  Remarks on the classification of a pair of commuting linear transformations in a finite dimensional space, Funkc. Anal. Prilozhen. 3 (1969), no.4, 81--82.

\bibitem{Gonzalez}
A. Gonzalez-Lopez, N. Kamran and P. J. Olver, Lie algebras of differential operators in two complex variables
 Amer. J. Math. (1992) v.114, 1163--1185.

\bibitem{GG} M.Goto, F.Grosshans, Semisimple Lie algebras, Marcel Dekker, Inc. New York and Basel, 1978.

 \bibitem{Qui}  D. Quillen, Projective modules over polynomial rings, Invent. Math. 36 (1976),
167--171.

\bibitem{PS} A.P. Petravchuk, K.Ya. Sysak, Lie algebras associated with modules over polynomial rings, Ukrainian Math. J. 69 (2018), 1433--1444.

\bibitem{Sus} A.~Suslin, Projective modules over polynomial rings are free, Translated in "Soviet Mathematics" 17(4) (1976), 1160--1164.

\end{thebibliography}
\end{document}